\newtheorem{theorem}{Theorem}[section]
\newtheorem{lemma}[theorem]{Lemma}
\newtheorem{proposition}[theorem]{Proposition}
\theoremstyle{definition}
\newtheorem{definition}[theorem]{Definition}
\newcommand{\seqnum}[1]{\href{https://oeis.org/#1}{\underline{#1}}}
\tikzset{snake it/.style={decorate, decoration=snake}}
\tikzstyle{doto} = [dash pattern= on 0.7cm off 0.1cm on \pgflinewidth off 4\pgflinewidth on \pgflinewidth off 4\pgflinewidth on \pgflinewidth off 0.1cm on 0.7cm]
\DeclareMathOperator{\young}{young}
\DeclareMathOperator{\eld}{eld}
\DeclareMathOperator{\Sel}{Sel}
\DeclareMathOperator{\unl}{unl}
\DeclareMathOperator{\impe}{impe}
\DeclareMathOperator{\impp}{impp}
\DeclareMathOperator{\lead}{lead}
\DeclareMathOperator{\bap}{gap}
\DeclareMathOperator{\repr}{R}
\def\input@path{{figures/}}
\title{Arboretum for a generalization of Ramanujan polynomials}
\author{Lucas Randazzo\thanks{\href{mailto:lucas.randazzo@u-pem.fr}{lucas.randazzo@u-pem.fr}.}}
\begin{document}

\maketitle

\begin{abstract}
In this paper, we expand on the work of Guo and Zeng from 2007 on a generalization of the Ramanujan polynomials and planar trees. We manage to find combinatorial interpretations of this family of polynomials in terms of Greg trees, Cayley trees, and planar trees by constructing bijections that preserve relevant tree statistics. 
\end{abstract}

\section{Introduction}
Study of the Ramanujan's sequence (\ref{eq:ramseq}) has led to findings regarding refinements to Cayley's formula (see \cite{berndt1985ramanujan,guo2007generalization,zeng1999ramanujan}). This formula gives us that the number of \emph{rooted labelled trees} with $n$ vertices is $n^{n-1}$. As an example of refinement, we have the polynomials $C_n$ defined as follows
\[
C_1 = 1, \hspace{0.5cm} C_{n+1}(y) = \left[ n(1+y)+y^2\partial_y \right]C_n
\]
and verifying (see \cite{zeng1999ramanujan}) $C_n(1) = n^{n-1}$. We have $C_2 = y + 1$ and $C_3 = 3y^2 + 4y+2$. It has been proven by Shor \cite{shor1995new} and Dumont and Ramamonjisoa \cite{dumont1996grammaire} that the coefficient of $y^k$ in $C_n(y)$ counts rooted labelled trees with $n$ vertices and $k$ \emph{improper edges} (see \seqnum{A217922}). These polynomials originate from the polynomials $P_n$, defined as
\[
P_1 = 1, \hspace{0.5cm} P_{n+1}(x,y) = \left[ x+n+y(n+y\partial_y) \right]P_n
\]
which verifies $P_n(0,y) = C_n(y)$. The polynomials $P_n$ are called the \emph{Ramanujan's polynomials} \cite{chapoton2002operades,chen2001bijections}. Indeed, originally Ramanujan \cite{ramanujan2013notebooks} introduced the following double sequence $\psi_k(r,x)$ as follows
\begin{equation}
\label{eq:ramseq}
\sum_{k=0}^{+\infty}\frac{(x+k)^{r+k}e^{-u(x+k)}u^k}{k!} = \sum_{k=1}^{r+1}\frac{\psi_k(r,x)}{(1-u)^{r+k}}.
\end{equation}
Berndt et al. \cite{berndt1985ramanujan,berndt1983ramanujan} would then define the $P_n$ polynomials as
\[
P_n(x,y) = \sum_{k=0}^{n-1}\psi_{k+1}(n-1,x+n)y^k.
\]
Following this, Guo and Zeng study in \cite{guo2007generalization} the following polynomial sequence, credited to Chapoton, as a generalization of the previous sequence 
\begin{equation}
\label{eq:qrec}
Q_1 = 1, \hspace{0.5cm} Q_{n+1}(x,y,z,t) = [x+nz+(y+t)(n+y\partial_y)]Q_n,
\end{equation}
verifying $Q_n(x,y,0,1) = P_n(x,y)$. We have for instance $Q_2 = x +y+t+z$ and
\[
Q_3 = x^2 + 3xy+3xz+3xt+3y^2+4yz+5yt+2z^2+4zt+2t^2.
\]
They find a combinatorial interpretation of this sequence in terms of \emph{planar labelled trees}. However, as stated by Josuat-Verg\`es in \cite{josuat2015derivatives}, the Ramanujan polynomials seem to also be correlated to \emph{Greg trees}. Initially defined by Flight in \cite{flight1990many} to represent genealogical trees for manuscripts, Greg trees are trees with both labelled and unlabelled nodes, the unlabelled ones being of degree at least three. 

Our goal is to relate the polynomials $Q_n$ with \emph{Greg trees} and \emph{Cayley trees}, and give a simple expression of $Q_n$ using different tree statistics. In Section 2 we introduce the different notions and definitions needed throughout this article, in particular we will define a sequence $R_n$ that will be key to most of our proofs. In Section 3, we will express $R_n$ in terms or relevant statistics on Greg trees using a recurrence relation for $R_n$. Section 4 will give a similar result for Cayley trees, by using a bijection between Greg trees and Cayley trees. Finally in Section 5 we will show how this relates to $Q_n$ by introducing a bijection between Cayley trees and planar labelled trees.

\section{Preliminaries}
We recall that a \emph{Cayley tree} is a tree with $n$ labelled vertices on $[n]$ such that no two vertices are labelled the same. Let $\mathcal{C}_n$ be the set of Cayley trees of size $n$ rooted at $1$, we have $|\mathcal{C}_n| = n^{n-2}$. A \emph{planar tree} is a rooted labelled tree in which the children of each vertex are linearly ordered. We denote by $\mathcal{O}_{n}$ the set of planar trees rooted at $1$ and labelled on $[n]$. The following definition, coined by Flight in \cite{flight1990many}, gives us an interesting generalization of Cayley trees.
 
\begin{definition}[Flight, \cite{flight1990many}]
Let $n \geq 1$. A \emph{Greg tree} of size $n$ is a tree such that exactly $n$ of its vertices are labelled in $[n]$, and the unlabelled vertices are of degree at least $3$.
\end{definition}

Let $\mathcal{G}_n$ be the set of Greg trees of size $n$ rooted at $1$. Let $\unl(T)$ denote the number of unlabelled vertices of $T \in \mathcal{G}_n$. If $\unl(T) = 0$, then $T \in \mathcal{C}_n$. See Figure \ref{img:greg} for an example of a Greg tree.

For a labelled tree $T$, especially for Greg trees, it is convenient to introduce $\lambda_T$ the labelling function of $T$. As a convention, we choose for $\lambda_T$ to not be defined over unlabelled vertices in a Greg tree.

\begin{figure}
\center
\begin{tikzpicture}

\path
(0,0) node(a1) {1}
++(1,-1) node(b1) {7}
++(0,-1) node[circle,draw](c1) {$u$}
+(0.5,-1) node(d1) {6}
+(-0.5,-1) node(d2) {9}
(-1,-1) node(b2) {4}
+(1,-1) node(c2) {8}
+(0,-1) node(c3) {2}
++(-1,-1) node(c4) {5}
+(0,-1) node(d3) {3};
\draw (b1) -- (a1) -- (b2) -- (c2) (d1) -- (c1) -- (d2);
\draw[double] (b1) -- (c1) (c3) -- (b2) -- (c4) -- (d3);

\end{tikzpicture}
\caption{An example of a Greg tree of size 9, with one unlabelled vertex $u$. The improper edges are doubled.}
\label{img:greg}
\end{figure}
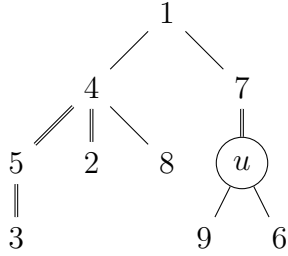

Since all the trees we consider are rooted, we will use oriented edges, with the following notation: if $(i,j)$ is an edge of $T$, then $i$ is the parent of $j$.

In an alternative proof of Cayley's formula \cite{shor1995new}, Shor introduced an interesting categorization of edges in a Cayley tree, which can be generalized to Greg trees.

\begin{definition}[Shor, \cite{shor1995new}]
Let $T \in \mathcal{C}_n \cup \mathcal{G}_n \cup \mathcal{O}_n$, and $i$ a labelled vertex of $T$. We define $\beta_T(i)$ to be the smallest label in the descendants of $i$. If $T \in \mathcal{C}_n \cup \mathcal{G}_n$, let $e=(i,j)$ be an edge of $T$. We say that $e$ is an \emph{improper edge}, or $j$ is an \emph{improper child} of $i$, if $i$ is labelled and $\lambda_T(i) > \beta_T(j)$. Otherwise, $e$ is a \emph{proper edge}, and $j$ is a \emph{proper child} of $i$. 
\end{definition}

For instance in Figure \ref{img:greg}, improper edges are represented with double lines, and we have $\beta_T(4) = 2$ and $\beta_T(7) = 6$. Naturally, $\beta_T$ is also defined on unlabelled vertices, and here we have $\beta_T(u) = 6$. Remark that an unlabelled vertex cannot be an improper parent, even though it can be an improper child.

Regarding planar trees, we need to use Guo and Zeng's generalization from \cite{guo2007generalization}.

\begin{definition}
Let $T \in \mathcal{O}_n$.
\begin{itemize}
\item Let $j$ be a vertex of $T$. $j$ is \emph{elder} if it has a brother $k$ to its right such that $\beta_T(k) < \beta_T(j)$. Otherwise, we say that $j$ is \emph{younger}. Let $\young_T(i)$ be the number of younger children of $i$ in $T$, and $\eld(T)$ be the number of elder vertices of $T$.

\item Let $e=(i,j)$ be an edge of $T$. We say that $e$ is an \emph{improper edge}, or $j$ is an improper child of $i$, if $j$ is a younger child of $i$ and $\lambda_T(i) > \beta_T(j)$. Otherwise, $e$ is a \emph{proper edge}, and $j$ is a \emph{proper child} of $i$.

\end{itemize}
\end{definition}

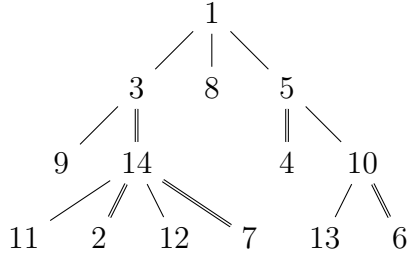
\begin{figure}
\center
\begin{tikzpicture}

\path
(0,0) node(a1) {1}
++(1,-1) node(b1) {5}
++(1,-1) node(c1) {10}
+(0.5,-1) node(d1) {6}
+(-0.5,-1) node(d2) {13}
(b1)++(0,-1) node(c2) {4}
(a1)++(0,-1) node(b2) {8}
++(-1,0) node(b3) {3}
++(0,-1) node(c3) {14}
++(1.5,-1) node(d3) {7}
++(-1,0) node(d4) {12}
++(-1,0) node(d5) {2}
++(-1,0) node(d6) {11}
(b3)++(-1,-1) node(c4) {9};

\draw (d2) -- (c1) -- (b1) -- (a1) -- (b3) -- (c4) (a1) -- (b2) (d4) -- (c3) -- (d6);
\draw[double] (d1) -- (c1) (c2) -- (b1) (d3) -- (c3) -- (d5) (c3) -- (b3) ;

\end{tikzpicture}
\caption{An example of a planar tree of size 14. The improper edges are shown doubled.}
\label{img:plan}
\end{figure}

We show an example of a planar tree, with improper edges doubled, in Figure \ref{img:plan}. In either case, let $\impe(T)$ be the number of improper edges of $T$. 
Remark that if $T$ is a planar tree with no elder child, its siblings are ordered by increasing order of $\beta_T$. In this case, the improper edges are the same as if we removed the order on the siblings of $T$.

Finally, we can extend the definition of \emph{improper} to vertices, when we want to know whether a vertex has improper outgoing edges or not.

\begin{definition}
\label{def:impp}
We say that a parent $i$ is \emph{proper} if it has no improper child. Otherwise we say that $i$ is an \emph{improper parent}.  Equivalently, a vertex $i$ is an improper parent if and only if it is labelled and $\beta_T(i) \neq \lambda_T(i)$. We denote by $\impp(T)$ the number of improper parents of $T$.
\end{definition}

We can now introduce the main theorem of Guo and Zeng in \cite{guo2007generalization}. It gives a combinatorial interpretation of $Q_n$ in terms of planar trees.

\begin{theorem}[Guo and Zeng, \cite{guo2007generalization}]
\label{th:qn}

\begin{equation*}
Q_n(x,y,z,t) = \sum_{T \in \mathcal{O}_{n+1}} x^{\young_T(1)-1}t^{\eld(T)}y^{\impe(T)}z^{n-\young_T(1)-\eld(T)-\impe(T)}.
\end{equation*}
\end{theorem}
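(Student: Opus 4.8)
The plan is to prove the identity by induction on $n$, by showing that the right-hand side, viewed as a polynomial in its own right, obeys the defining recurrence (\ref{eq:qrec}). Write $w(T) = x^{\young_T(1)-1}t^{\eld(T)}y^{\impe(T)}z^{n-\young_T(1)-\eld(T)-\impe(T)}$ for $T \in \mathcal{O}_{n+1}$ and set $F_n = \sum_{T\in\mathcal{O}_{n+1}} w(T)$; note the four exponents always sum to $n-1$, so $F_n$ is homogeneous of degree $n-1$, as is $Q_n$. Since $F_1 = 1 = Q_1$, it suffices to show $F_{n+1} = [x+nz+(y+t)(n+y\partial_y)]F_n$. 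I would obtain this by grouping the trees of $\mathcal{O}_{n+2}$ according to the tree $T' \in \mathcal{O}_{n+1}$ left after removing the largest label $n+2$, and proving that the trees $T$ collapsing to a fixed $T'$ contribute exactly $[x+nz+(y+t)(n+y\partial_y)]\,w(T')$. The elementary fact used throughout is that because $n+2$ exceeds every other label it never attains a minimum, so inserting it leaves $\beta_{T'}$ unchanged on the old vertices; this is what lets the statistics of $T'$ persist inside $T$.

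First I would treat the \emph{generic} insertions of $n+2$, which realize the summand $x+nz+n(y+t)$. There are exactly $2n+1$ ways to add $n+2$ as a leaf and $n$ ways to add it by subdividing an edge, and I would check case by case how each of the four statistics moves, using that $\beta_T(n+2)=n+2$ is maximal: adding $n+2$ as the new rightmost child of the root is the unique way to make it a younger child of $1$, raising $\young_T(1)$ by one and giving the factor $x$; adding it as the rightmost child of one of the $n$ non-root vertices keeps it younger and proper and raises only the $z$-exponent, giving $nz$; placing it in one of the remaining $n$ non-rightmost leaf-gaps forces it to be elder and proper, raising $\eld(T)$ by one and giving $nt$; and subdividing any of the $n$ edges always creates exactly one new improper edge directly below $n+2$ while leaving the other statistics fixed, giving $ny$. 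Together these account for $x+nz+nt+ny$.

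The remaining operator term is $(y+t)\,y\partial_y$, which acts on $w(T')$ as multiplication by $\impe(T')(y+t)$; I would realize it by attaching to each improper edge of $T'$ two further insertions of $n+2$, one contributing a factor $y$ and one a factor $t$. The model move at an improper edge $(i,j)$, where $j$ is a younger child of $i$ with $\beta_{T'}(j)<\lambda_{T'}(i)$, inserts $n+2$ in the position of $i$ among its siblings and hangs both $i$ and the detached subtree at $j$ below $n+2$; the two orders of these children produce, respectively, a configuration with one extra elder vertex (factor $t$) and one with one extra improper edge (factor $y$), all other statistics being preserved. Summed over the $\impe(T')$ improper edges this yields $\impe(T')(y+t)$, completing the match with the operator.

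The main obstacle is to turn this bookkeeping into a genuine weight-preserving bijection, i.e.\ to check that the generic and improper-edge insertions are pairwise distinct and that together they produce every tree of $\mathcal{O}_{n+2}$ exactly once. The naive inverse ``delete $n+2$ and splice its children into its parent'' is \emph{not} correct: at an improper parent it sends a tree to the wrong $T'$, which is precisely why the dedicated improper-edge insertions are needed and must be defined so as to reroute exactly those trees. I expect the bulk of the work to be a careful analysis of how the elder/younger status and the improperness of the edges around $n+2$ behave when $n+2$ is internal, establishing that the fiber over each $T'$ has size $3n+1+2\,\impe(T')$ with the claimed weight distribution. Once surjectivity and injectivity are verified, the recurrence $F_{n+1}=[x+nz+(y+t)(n+y\partial_y)]F_n$ follows, and the theorem is obtained by induction.
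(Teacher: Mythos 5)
The paper itself does not prove Theorem~\ref{th:qn}; it imports it from Guo and Zeng. Still, your strategy --- verify that the weighted sum over $\mathcal{O}_{n+1}$ satisfies the recurrence (\ref{eq:qrec}) by classifying the ways of inserting the largest label --- is exactly the strategy the paper uses for its analogous Theorem~\ref{th:main1}, and it is also the shape of Guo and Zeng's original argument. Your ``generic'' insertions are handled correctly: the counts $2n+1$ and $n$, the effect of each placement on the four statistics (using that $\beta$ is unchanged on old vertices and that $n+2$ never becomes a minimum), and the consistency check that the fiber over $T'$ should have size $3n+1+2\impe(T')$ (for $n=2$ this gives $4\cdot 7+2\cdot 1=30=|\mathcal{O}_4|$) are all right.

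The genuine gap is in your model move for the $(y+t)\,y\partial_y$ term: it is wrong as stated, not merely incomplete. Detaching only the subtree at $j$ from $i$ disturbs the elder/younger status of $j$'s former \emph{left}-siblings: a sibling that was elder solely because $j$ sat to its right becomes younger, which can simultaneously destroy an elder and create a new improper edge below $i$. Concretely, take $T'\in\mathcal{O}_5$ with edges $(1,5)$, $(5,3)$, $(5,2)$, $(3,4)$, the children of $5$ ordered $3$ then $2$. Here $3$ is elder, $(5,2)$ is the unique improper edge, and $w(T')=tyz$. Your move at $(5,2)$ yields the trees with $6$ replacing $5$ under $1$ and with children $(2,5)$ or $(5,2)$, keeping $(5,3)$ and $(3,4)$; in both, $3$ is now the only (hence younger) child of $5$, so $(5,3)$ has become improper, and the weights come out as $y^3z$ and $ty^2z$ instead of the required $y\cdot w(T')=ty^2z$ and $t\cdot w(T')=t^2yz$. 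The repair is to move $j$ \emph{together with all of its left-siblings} as a single ordered block under $n+2$, placing the (now pruned) vertex $i$ either after or before that block to get the $y$- and $t$-weighted trees respectively --- the same kind of block operation the paper introduces as $\zeta_i$ in Section~5, and on the example above this does give $ty^2z$ and $t^2yz$. You rightly flag that the bijectivity analysis is the main labor, but the specific insertion you propose would not survive it, so the induction step does not close as written.
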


Our goal is to find a similar equality for Greg trees and Cayley trees. Let us consider the following polynomials.

\begin{definition}[Josuat-Verg\`es, \cite{josuat2015derivatives}]
\begin{equation}
\label{eq:hrec}
H_1 = 1, \hspace{0.5cm} H_{n+1}(y) = (2n-1+(n+1)y)H_n(x) + (1+y)^2H_n'(x).
\end{equation}
\end{definition}

These polynomials count the number of Greg trees with $n$ labelled vertices (\seqnum{A048159}). Using (\ref{eq:qrec}) and (\ref{eq:hrec}), we obtain
\begin{equation}
\label{eq:hisq}
H_{n+1}(y) = Q_n(1,y+1,1,0).
\end{equation}

This suggests that there is a correlation between Greg trees and planar trees. Moreover, we have

\begin{proposition}[Josuat-Verg\`es, \cite{josuat2015derivatives}]
\label{prop:h}
\begin{equation}
\label{eq:hbe}
H_n(y) = \sum_{T \in \mathcal{G}_n}y^{\unl(T)} = \sum_{T \in \mathcal{C}_n} (1+y)^{\impe(T)}.
\end{equation}
\end{proposition}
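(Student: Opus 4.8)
The plan is to prove the two equalities by different means: the first, $H_n(y)=\sum_{T\in\mathcal{G}_n}y^{\unl(T)}$, by showing that the Greg-tree sum satisfies the defining recurrence (\ref{eq:hrec}) together with $H_1=1$ (so that, the recurrence determining the sequence from its first term, the sum must equal $H_n$); the second, $\sum_{T\in\mathcal{G}_n}y^{\unl(T)}=\sum_{T\in\mathcal{C}_n}(1+y)^{\impe(T)}$, by a direct weight-preserving bijection, which avoids having to control improper edges under a recurrence. The base case is immediate: for $n=1$ both sides equal $1$, and as a sanity check $\mathcal{G}_2=\mathcal{C}_2$ is the single edge, with no unlabelled vertex and no improper edge.

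For the first equality I would argue by deletion of the largest label. Write $u=\unl(T)$. Given $T'\in\mathcal{G}_{n+1}$, examine the vertex $v$ labelled $n+1$; it is never the root, so it has a parent $p$, and I distinguish cases by the number $d$ of children of $v$ and the nature of $p$: (i) $d=0$, $p$ labelled; (ii) $d=0$, $p$ unlabelled of degree $\ge 4$; (iii) $d=0$, $p$ unlabelled of degree exactly $3$; (iv) $d=1$; (v) $d\ge 2$. Removing $v$ — contracting the leftover degree-$2$ vertex $p$ in case (iii), and instead simply un-labelling $v$ in case (v) — returns a tree $T\in\mathcal{G}_n$, and these five cases are exhaustive and disjoint. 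Read backwards, the operations on $T$ are: attach $n+1$ as a new leaf at a labelled vertex ($n$ choices, $\unl$ unchanged), at an unlabelled vertex ($u$ choices, $\unl$ unchanged), subdivide an edge by a new unlabelled vertex carrying $n+1$ ($n+u-1$ choices, $\unl+1$), subdivide an edge by $n+1$ itself ($n+u-1$ choices, $\unl$ unchanged), or relabel one of the $u$ unlabelled vertices as $n+1$ ($u$ choices, $\unl-1$). Collecting weights, a tree $T$ with $u$ unlabelled vertices contributes $(2n-1+2u)\,y^{u}+(n+u-1)\,y^{u+1}+u\,y^{u-1}$; rewriting the sum over $\mathcal{G}_n$ through $\sum_T y^{u}$ and $\sum_T u\,y^{u-1}$ produces exactly the linear-in-$H_n$ term and the $(1+y)^2H_n'$ term of (\ref{eq:hrec}). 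The one delicate point is the degree constraint on unlabelled vertices: it is precisely what separates cases (ii) and (iii) and forces the contraction, so I would verify case (iii) and its inverse most carefully.

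For the second equality I would expand $(1+y)^{\impe(T)}=\sum_{S}y^{|S|}$ over subsets $S$ of the improper edges of $C\in\mathcal{C}_n$, so that the right-hand side counts pairs $(C,S)$ weighted by $y^{|S|}$; it then suffices to build a bijection $\Phi$ between $\mathcal{G}_n$ and such pairs with $\unl(T)=|S|$. The natural candidate collapses each unlabelled vertex of a Greg tree onto a labelled neighbour, leaving a Cayley tree, and records each collapse as one distinguished improper edge; conversely each marked improper edge of $C$ is expanded back into an unlabelled vertex. I expect this to be the hard part, for two reasons. First, an unlabelled vertex of degree $d\ge 3$ carries more information than a single edge, so one must canonically choose which edge encodes it — the edge toward its $\beta$-minimal child is the plausible choice — and then check that after collapsing this edge is genuinely improper and that distinct unlabelled vertices leave distinct marks. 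Second, invertibility demands that expanding the marked edges always regenerates unlabelled vertices of degree $\ge 3$ and never a forbidden degree-$2$ vertex, and that $\Phi$ respects the proper/improper dichotomy; the latter is what makes the construction tractable, since collapsing only the (largest) unlabelled data leaves every $\beta_T$-value, and hence every improper classification, unchanged. Pinning down this collapse–expansion correspondence — essentially the Greg–Cayley bijection — is the technical heart, and once it is in place the equality of the two sums follows at once.
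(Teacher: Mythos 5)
Your two-step plan reproduces, in the single-variable special case, exactly the two arguments this paper uses for its generalizations: the paper does not prove Proposition~\ref{prop:h} directly (it is quoted from Josuat-Verg\`es), but your recurrence argument for the first equality is the one-variable collapse of the proof of Theorem~\ref{th:main1} (your five deletion cases correspond to the eight operations $f_1,\dots,f_8$ there, with $f_1,f_2$ merged, $f_5,f_6$ merged and $f_7,f_8$ merged), and your collapse--expansion bijection for the second equality is the map $\varphi$ of Theorem~\ref{th:main2}. The first half of your argument is complete and the combinatorics is right, but you must reconcile one point: your count gives $H_{n+1}=(2n-1+(n-1)y)H_n+(1+y)^2H_n'$, with coefficient $(n-1)y$, whereas (\ref{eq:hrec}) as printed has $(n+1)y$. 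The printed recurrence is in fact inconsistent with your own sanity check ($\mathcal{G}_2=\mathcal{C}_2$ forces $H_2=1$, while (\ref{eq:hrec}) gives $1+2y$) and with (\ref{eq:hisq}), so the $(n+1)$ is a typo for $(n-1)$; still, as written your claim that the deletion count ``produces exactly'' the terms of (\ref{eq:hrec}) is false for the displayed formula, and you should state explicitly which recurrence you are verifying and why it is the correct one.

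The second equality is where the substance lies, and your proposal stops at a plan precisely at the hard step. One concrete failure mode you should confront: the naive expansion ``one marked improper edge $(a,c)$ $\mapsto$ subdivide it by an unlabelled vertex'' creates an unlabelled vertex of degree $2$, which is forbidden, so marked edges cannot be expanded independently; moreover several unlabelled vertices of a Greg tree can collapse onto the same labelled vertex, so a single improper parent must encode a whole chain. The paper resolves both issues at once: for each improper parent $a$ one chooses a subset $S(a)$ of its improper children, inserts a chain of $|S(a)|$ unlabelled vertices above $a$, and distributes all children of $a$ among the chain according to intervals of $\beta_T$-values, which guarantees every new unlabelled vertex has at least two children; injectivity is then a separate lemma (Lemma~\ref{lem:varphi}), and well-definedness of the inverse requires showing the vertex-merging rewriting is confluent (Newman's lemma). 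Your guess that the unlabelled vertex is encoded by ``the edge toward its $\beta$-minimal child'' also conflates the collapse target (which is the $\beta$-minimal child) with the marked edge (which, in the paper's convention, is determined by the selected child of largest $\beta$ in each block). Until the grouping of children, the choice of marked edges, and the invertibility are pinned down, the second equality remains unproved in your write-up.
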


The original paper asks about the existence of a bijective proof for (\ref{eq:hbe}). In the Section 4 we will indeed construct such a bijection. We will actually find a bijection that generalizes Proposition \ref{prop:h} for the following polynomials in 4 variables.

\begin{definition}
\label{def:rn}
\begin{equation}
\label{eq:defrn}
R_n(x,y,z,t) = Q_n(x,y+1,z,t-1)
\end{equation}
\end{definition}
$R_n$ allows us to bridge the gap between $H_n$ and $Q_n$, since we can see $R_n$ as a generalization of $H_n$, as we have $R_n(1,y,1,1)~=~H_n(y)$. In particular we have $R_{n+1}(0,y,0,1) = (2n-1)!!(y+1)^n$, and $R_n(0,y,0,0)$ gives the Ward numbers (\seqnum{A134991}). We will explain in more details why this last property is true at the end of Section 3.

Before moving on to the next section, we need to introduce one more statistic that will be useful in the next section.

\begin{definition}
Let $T$ be a Greg or Cayley tree. Let $i$ be a labelled vertex of $T$. Let $L(i) = (1=a_0,a_1,\ldots,a_k = i)$ be the only path through $T$ from the root $1$ to $i$. We define the \emph{greater ancestors path} of $i$, noted $\bap(i) = (a_p,a_{p+1},\ldots,i)$, the longest path in $T$ included in $L(i)$ and containing $i$ such that, for every labelled vertex $j \in \bap(i)$, $\lambda_T(j) \geq \lambda_T(i)$.
\end{definition}

For example, in Figure \ref{img:greg}, we have $\bap(3) = (4,5,3)$, $\bap(2) = (4,2)$ and $\bap(6)~=~(6)$.

\begin{definition}
Let $i \in T$ and $\bap(i) = (a_p,\ldots,i)$. $i$ is a \emph{leading vertex} if $\beta_T(a_p)~=~\lambda_T(i)$.
\end{definition}
Remark that $1$ and $2$ are always leading vertices. Also remark that if $i$ is a leading vertex, then for all $j \in \bap(i)$, $\beta_T(j) = \lambda_T(i)$. This also implies with Definition \ref{def:impp} that $i$ is a proper parent.

We will see that the number of leading vertices is relevant for Greg trees, but not in Cayley or planar trees. Hence we need the following Lemma.

\begin{lemma}
\label{lem:leadimp}
Let $T \in \mathcal{C}_n$. We have
\begin{equation} \lead(T) = n-\impe(T).\end{equation}
\end{lemma}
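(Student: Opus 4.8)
The plan is to recast the claim in terms of proper edges. Since $T\in\mathcal{C}_n$ has $n$ vertices it has $n-1$ edges, each of which is either proper or improper, so $\impe(T)=n-1-p(T)$, where $p(T)$ denotes the number of proper edges. Hence the identity $\lead(T)=n-\impe(T)$ is equivalent to $\lead(T)=1+p(T)$, and I will prove this form by exhibiting a bijection $\phi$ from the set of leading vertices of $T$ onto the disjoint union $\{\star\}\sqcup\{\text{proper edges of }T\}$, where $\star$ is a formal symbol standing for the root.

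To define $\phi$, let $i$ be a leading vertex and write $\bap(i)=(a_p,a_{p+1},\ldots,i)$ as in the definition. If $a_p$ is the root $1$, set $\phi(i)=\star$; this occurs exactly when $i=1$, since the root carries the smallest label and the condition $\lambda_T(a_p)\ge\lambda_T(i)$ then forces $\lambda_T(i)=1$. Otherwise $a_p$ has a parent $a_{p-1}$ in $T$, and I set $\phi(i)=(a_{p-1},a_p)$, the edge directly above $a_p$. Two observations make $\phi$ well defined and injective. First, because $\bap(i)$ is the longest admissible path, the climb stopped at $a_p$ precisely because $\lambda_T(a_{p-1})<\lambda_T(i)$; as $i$ is leading we have $\beta_T(a_p)=\lambda_T(i)$, so $\lambda_T(a_{p-1})<\beta_T(a_p)$, and therefore the edge $(a_{p-1},a_p)$ is indeed proper. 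Second, the leading condition $\beta_T(a_p)=\lambda_T(i)$ shows that $i$ can be recovered from the edge $\phi(i)$ as the label $\beta_T(a_p)$; hence $\phi$ is injective.

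The main work is surjectivity, and I expect the delicate point to lie here. Given a proper edge $(u,v)$, with $u$ the parent of $v$ so that $\lambda_T(u)\le\beta_T(v)$, I will show that $i:=\beta_T(v)$ is a leading vertex with $\phi(i)=(u,v)$; the crux is to prove that the topmost vertex of $\bap(i)$ is exactly $v$. On one hand, every label in the subtree of $v$ is at least $\beta_T(v)=\lambda_T(i)$, so the entire path from $v$ down to the vertex $i$ has labels $\ge\lambda_T(i)$, whence $\bap(i)$ reaches at least up to $v$. On the other hand, the proper-edge hypothesis $\lambda_T(u)\le\beta_T(v)=\lambda_T(i)$, combined with $\lambda_T(u)\ne\lambda_T(i)$ (since $u$ is a strict ancestor of $i$), forces $\lambda_T(u)<\lambda_T(i)$, which prevents $\bap(i)$ from climbing past $v$. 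Thus the top of $\bap(i)$ is $v$, and since $\beta_T(v)=\lambda_T(i)$ the vertex $i$ is leading with $\phi(i)=(u,v)$. This establishes that $\phi$ is a bijection, giving $\lead(T)=1+p(T)=n-\impe(T)$, as desired.
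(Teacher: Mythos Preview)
Your proof is correct and follows essentially the same approach as the paper: the same bijection $\phi$ sending a leading vertex $i$ to the edge just above the top of $\bap(i)$, with the root handled separately. Your surjectivity argument is slightly more direct than the paper's (you name the preimage as the vertex labelled $\beta_T(v)$ outright, whereas the paper descends from $v$ along minimal-$\beta$ children until hitting a proper parent), but the two descriptions pick out the same vertex and the overall strategy is identical.
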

\begin{proof}
Since every edge is either proper or improper, we have that $n-1-\impe(T)$ is the number of proper edges of $T$. Hence to prove the lemma, we can find a bijection between the set of leading vertices except $1$, and the proper edges of $T$. 

Let $i$ be a leading vertex of $T$ that is not $1$.  Let $\bap(i) = (a_p,\ldots,i)$. Since $i$ is not $1$, let $a_{p-1}$ be the parent of $a_p$. Let $\phi(i) = (a_{p-1},a_p)$. 

$\phi$ is injective. Indeed, let $u$ be another leading vertex of $T$, and assume that $\phi(u) = \phi(i)$. We would have $a_p \in \bap(i) \cap \bap(u)$, hence verifying both $\beta_T(a_p) = \lambda_T(i)$ and  $\beta_T(a_p) = \lambda_T(u)$, which is impossible.

$\phi$ is surjective. Let $(a,b)$ be a proper edge of $T$. From $b$, we can go down the tree by choosing the child with the smallest value for $\beta_T$, until we find a proper parent $u$, which we can always find since leaves are. On this path, $\beta_T$ is constant, equal to $\lambda(u)$. Hence this path is a suffix of $\bap(u)$. Since $(a,b)$ is a proper edge, $a \not\in \bap(u)$, so the path from $b$ to $u$ is $\bap(u)$, and $\phi(u) = (a,b)$.

Hence, $\phi$ is bijective, which proves the lemma.
\end{proof}
\section{Greg Trees and $R_n$}
The first result we show is that $R_n$ can be seen as a generating function for Greg trees. This is a generalization of the first equality of Proposition \ref{prop:h}.
\begin{theorem}
\label{th:main1}
\begin{equation}
\label{eq:main1}
R_n(x,y,z,t) = \sum_{T \in \mathcal{G}_{n+1}} y^{\unl(T)} t^{\impp(T)} x^{\deg_T(1)-1} z^{\lead(T)-\deg_T(1)-1}.
\end{equation}
\end{theorem}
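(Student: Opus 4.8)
The plan is to prove \eqref{eq:main1} by induction on $n$, after transporting the recurrence \eqref{eq:qrec} through the substitution \eqref{eq:defrn}. First I would record the recurrence for $R_n$. Writing \eqref{eq:qrec} as $Q_{n+1}=(x+nz)Q_n+(y+t)(n+y\partial_y)Q_n$ and evaluating at $(x,y+1,z,t-1)$, the prefactor is unchanged because $(y+1)+(t-1)=y+t$, while the chain rule gives $(\partial_yQ_n)(x,y+1,z,t-1)=\partial_yR_n$. Hence
\[
R_{n+1}=\bigl[\,x+nz+(y+t)\bigl(n+(y+1)\partial_y\bigr)\,\bigr]R_n,\qquad R_1=1 .
\]
The base case $n=1$ is immediate: a tree with two labelled vertices admits no unlabelled vertex (which would need degree $\ge 3$), so $\mathcal G_2$ reduces to the single edge $1\to 2$, with $\unl=\impp=0$, $\deg_T(1)=1$ and $\lead=2$, whose monomial is $1=R_1$.

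For the induction step I would set $S_n$ to be the right-hand side of \eqref{eq:main1} and show $S_{n+1}$ equals the operator above applied to $S_n$; as the recurrence with $R_1=1$ has a unique solution, this forces $S_n=R_n$. The mechanism is deletion and insertion of the maximal label. Each $T\in\mathcal G_{n+2}$ yields a unique $T'\in\mathcal G_{n+1}$ upon removing $n+2$, and I would partition $\mathcal G_{n+2}$ by the role of $n+2$, matching each class to one monomial. If $n+2$ is a leaf, its parent is the root (inverting attachment of a leaf at $1$: the $x$ term), a non-root labelled vertex ($n$ choices: the $nz$ term), or an unlabelled vertex $u$; when $\deg_T(u)\ge 4$ deletion leaves $u$ legal ($y\partial_y$, choosing $u$ among the $\unl(T')$ unlabelled vertices), and when $\deg_T(u)=3$ we contract $u$, the type of its surviving child producing the $ny$ term (labelled child) or the $y^2\partial_y$ term (unlabelled child). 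If $n+2$ is internal it is an improper parent; with a single child, contracting $n+2$ gives the $nt$ term (labelled child) or the $ty\partial_y$ term (unlabelled child), and with at least two children, relabelling $n+2$ as an unlabelled vertex of degree $\ge 3$ gives the $t\partial_y$ term. These eight operations sum to $x+nz+n(y+t)+(y^2+y+ty+t)\partial_y=x+nz+(y+t)\bigl(n+(y+1)\partial_y\bigr)$, as required.

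The core of the argument, and the step I expect to be hardest, is verifying that each insertion shifts the four statistics by exactly the amount recorded in its monomial; the delicate one is $\lead$, since leading vertices are governed by the global paths $\bap(\cdot)$ and the values $\beta_T$. The unlocking observation is the maximality of $n+2$: inserting it never produces a new minimum, so $\beta_T$ is preserved on every old vertex, and a vertex that is either unlabelled or carries the maximal label never terminates any $\bap(\cdot)$, so the top of every $\bap$ and its $\beta_T$ value are unchanged — hence no old vertex changes leading status. Moreover $n+2$ is itself leading only when attached as a leaf to a labelled vertex, where it becomes the single new leading vertex: at the root the simultaneous increment of $\deg_T(1)$ cancels it in the exponent of $z$ (the $x$ term), and at a non-root labelled vertex the exponent of $z$ rises by one (the $nz$ term). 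For $\impp$ I would use the characterisation in Definition \ref{def:impp}: since $\beta_T$ is fixed on old labelled vertices, $\impp$ can only change through $n+2$, and $n+2$ is an improper parent precisely when $\beta_T(n+2)\neq n+2$, i.e. exactly when it acquires a child. The remaining bookkeeping — that each operation outputs a genuine element of $\mathcal G_{n+2}$, that the maps are injective, and that the eight classes are disjoint and exhaustive — is then routine.
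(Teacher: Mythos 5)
Your proposal is correct and follows essentially the same route as the paper: derive the recurrence $R_{n+1}=[x+nz+(y+t)(n+(y+1)\partial_y)]R_n$ and verify that the combinatorial sum satisfies it via the same eight operations on the maximal label (you describe them from the deletion side, the paper from the insertion side, but the case split and the weight bookkeeping --- including the key observation that $\beta_T$ is preserved on old vertices, so only $n+2$ can affect $\impp$ and $\lead$ --- coincide). No substantive difference.
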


\begin{proof}
The definition of $R_n$ (\ref{eq:defrn}) gives us the following recurrence:
\begin{equation}
\label{eq:prn}
R_1 = 1, \hspace{0.5cm} R_{n+1} = [x +nz + (y+t)(n+(y+1)\partial_y)]R_n.
\end{equation}
Now we just need to prove that the right hand side in (\ref{eq:main1}) is a solution to (\ref{eq:prn}). We define a weight function over Greg trees as 
\[
\omega(T) = y^{\unl(T)} t^{\impp(T)} x^{\deg_T(1)-1} z^{\lead(T)-\deg_T(1)-1}.
\] 
The recurrence can be interpreted as the weighted sum of the different ways to add a $n+2$-labelled vertex to a Greg tree of size $n+1$. In the following, we denote by $T$ the base tree of size $n+1$, and $T'$ the result of adding a vertex labelled $n+2$ to $T$. We have eight distinct operations, noted $f_i$ for $i \in [1,8]$:
\begin{enumerate}
\item $\textstyle \sum_{T' \in f_1(T)}\omega(T') = x\omega(T)$. 

We add the new vertex labelled $n+2$ as a child of $1$. This new vertex has to be leading, since its parent is smaller that itself, and $\beta_T(n+2)~=~n~+~2$, so $\lead(T')~=~\lead(T)~+~1$. We also increase the degree of $1$, so we have $\deg_{T'}(1)~=~\deg_T(1)~+~1$. Since \[ \lead(T')-\deg_{T'}(1) = \lead(T) + 1-(\deg_T(1) +1) = \lead(T) - \deg_T(1), \] the degree of $z$ in $\omega(T')$ does not change.

\item $\textstyle \sum_{f_2(T)}\omega(T') = nz\omega(T)$. 

We add the new vertex labelled $n+2$ as a child to another labelled vertex that is not $1$, which makes $n$ distinct vertices to chose from. Like in the previous case, this new vertex is leading, but the degree of $1$ does not change, so we need to multiply by $z$ instead of $x$ in this case.

\item $\textstyle \sum_{f_3(T)}\omega(T') = y\partial_y\omega(T)$. 

We add the vertex labelled $n+2$ as a child to an existing unlabelled vertex $u$. Here we choose amongst the unlabelled vertices of $T$, which are counted by the degree of $y$ in $\omega(T)$. Multiplying by the degree of $y$ in the equation translates to deriving with respect to $y$, then multiplying by $y$ the weight function. In this case, the new vertex is not leading. Indeed, its parent $u$ is unlabelled, so it has at least two other descendants smaller than $n+2$, so $\beta_{T'}(u) = \beta_T(u) \neq n+2$. The degree of $1$ also stays unchanged.

\item $\textstyle \sum_{f_4(T)}\omega(T') = t\partial_y\omega(T)$. 

We relabel an unlabelled vertex with $n+2$. Since we remove an unlabelled vertex from $T$, we do not have $y$ as a factor here in the recursion. However, $\beta_{T'}(n+2) \neq n+2$, so $n+2$ is a new improper parent and it is not a leading vertex, so we only add a factor $t$.

\item $\textstyle \sum_{f_5(T)}\omega(T') = tn\omega(T)$. 

We add the vertex $n+2$ in the middle of an existing edge $(i,j)$, where $j$ is labelled. We have $n$ choices of edges, as $j$ can be any labelled vertex of $T$ except $1$. We have that $\beta_{T'}(n+2) = \beta_{T'}(j) = \beta_T(j) \neq n+2$, so $n+2$ is a new improper parent and not a leading vertex. 

\item $\textstyle \sum_{f_6(T)}\omega(T') = ty\partial_y\omega(T)$. 

We add the vertex $n+2$ in the middle of an existing edge $(i,j)$, where $j$ is unlabelled. The same reasoning as the previous case applies to this one, so $n+2$ is a new improper parent and not a leading vertex.

\item $\textstyle \sum_{f_7(T)}\omega(T') = yn\omega(T)$. 

We add an unlabelled vertex $u$ in the middle of an edge $(i,j)$ where $j$ is labelled, then add the vertex $n+2$ as its second child. $u$ cannot be an improper parent, since it is not labelled. Moreover, similarly to case $3$, as the highest labelled child of an unlabelled vertex, $n+2$ is not leading.

\item $\textstyle \sum_{f_8(T)}\omega(T') = y^2\partial_y\omega(T)$. 

We add an unlabelled vertex $u$ in the middle of an edge $(i,j)$ where $j$ is unlabelled, then add the vertex $n+2$ as its second child. The same reasoning as the previous case applies.

\end{enumerate}

We also needed to verify that this construction does not change the number of improper parents or leading vertices among the first $n+1$ vertices. However, it is easy to prove since for any vertex $i \in T$, $\beta_T(i) = \beta_{T'}(i)$. Moreover, in cases $4$ and $5$, we introduce a vertex $a$ in the middle of an edge $(i,j)$, so $\beta_{T'}(a) = \beta_{T'}(j)$, and the edge $(i,a)$ in $T'$ is improper if and only if $(i,j)$ was improper in $T$, while the edge $(a,j)$ is always proper. Another consequence is that any descendant of $a$ that was leading in $T$ is still leading in $T'$. 

To conclude, let $f(T) = \cup f_i(T)$, we have 
\[
\sum_{T' \in f(T)} \omega(T') = [x +nz + (y+t)(n+(y+1)\partial_y)]\omega(T).
\]
Since it is clear that $\{f(T), T \in \mathcal(G)_{n+1}\}$ is a partition of $\mathcal{G}_{n+2}$, we only need to sum the above equation over  $T \in \mathcal{G}_{n+1}$ to obtain (\ref{eq:prn}).

\end{proof}

As pointed out in the previous section, the sequence of polynomials $R_n(0,y,0,0)$ have positive coefficients, and gives the triangle of Ward numbers \seqnum{A134991}. It first means that $Q_n(0,y+1,0,-1)$ has positive coefficients, which was not immediate from Theorem \ref{th:qn}. This sequence can be interpreted as the face numbers of the space of \emph{phylogenetic trees}, introduced by Billera, Holmes and Vogtmann in \cite{billera2001geometry}, which is also the tropical Grassmannian of lines $\mathcal{G}_{2,n}$ (see \cite{speyer2004tropical}). The faces of the space of phylogenetic trees can be seen as unrooted Greg trees which only labelled vertices are its leaves. The degree of each face is the number of unlabelled vertices of the Greg tree. Remark that maximal faces are trivalent trees, of which there are $(2n-3)!!$. This definition, along with Theorem \ref{th:main1}, is enough to prove our claim. Indeed, $R_n(0,y,0,0) = \sum_{T} y^{\unl(T)}$, where $deg_1(T) = 1$, $\impp(T) = 0$, and $lead(T) = 2$. Recall that $1$ and $2$ are always leading vertices in $T$. Now assume that $T$ has at least one internal labelled vertex, and let $j \neq 1$ be one. Let us assume that $j$ is not an improper parent. Let $k$ be the smallest strict descendant of $j$, then we can easily see that $k$ is leading. Hence $T$ cannot have internal labelled vertices, which proves our claim.

\section{A bijection for Cayley Trees}
We now give a new interpretation of $R_n$, with Cayley trees this time. The proof builds on the previous theorem, and gives a statistics preserving bijection between Greg trees ans Cayley trees. This is a generalization of the second equality of Proposition~\ref{prop:h}.
\begin{theorem}
\label{th:main2}
\begin{equation}
\label{eq:main2}
R_n(x,y,z,t) = \sum_{T \in C_{n+1}} (y+1)^{\impe(T)-\impp(T)}(y+t)^{\impp(T)} x^{\deg_T(1)-1} z^{\lead(T)-\deg_T(1)-1}.
\end{equation}
\end{theorem}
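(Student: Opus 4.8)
The plan is to prove Theorem~\ref{th:main2} by constructing an explicit statistics-preserving bijection between Greg trees in $\mathcal{G}_{n+1}$ and Cayley trees in $\mathcal{C}_{n+1}$, and then checking that the weight contributed by each side matches. The guiding principle comes from comparing the two right-hand sides: Theorem~\ref{th:main1} attaches weight $y^{\unl(T)}t^{\impp(T)}$ to a Greg tree, whereas Theorem~\ref{th:main2} attaches $(y+1)^{\impe(T)-\impp(T)}(y+t)^{\impp(T)}$ to a Cayley tree. The factors $x^{\deg_T(1)-1}$ and $z^{\lead(T)-\deg_T(1)-1}$ are identical in both, so the bijection must preserve both $\deg_T(1)$ and $\lead(T)$, while converting the unlabelled-vertex/improper-parent data of a Greg tree into the improper-edge data of a Cayley tree.

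The core idea is a local expansion that contracts unlabelled vertices. Given a Greg tree $T$, each unlabelled vertex $u$ has degree at least $3$, hence at least two children (since one incident edge goes to its parent). I would remove $u$ and reattach its children to $u$'s parent, but the combinatorial bookkeeping is dictated by the algebra: the factor $(y+1)^{\impe}$ expanded binomially should account for the freedom of marking each improper edge either as ``coming from a genuine unlabelled vertex'' (contributing $y$) or not (contributing $1$), and the refinement $(y+t)^{\impp}$ versus $(y+1)^{\impe-\impp}$ distinguishes improper edges emanating from improper parents from the rest. Concretely, I expect the right strategy is to start from the Greg-tree weight of Theorem~\ref{th:main1} and show that summing $\omega(T)$ over all Greg trees mapping to a fixed Cayley tree $C$ produces exactly the weight assigned to $C$ in~(\ref{eq:main2}). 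That reduces the theorem to a fibre-wise identity: for a fixed Cayley tree $C$,
\[
\sum_{T \,\mapsto\, C} y^{\unl(T)}t^{\impp(T)} = (y+1)^{\impe(C)-\impp(C)}(y+t)^{\impp(C)},
\]
with $\deg(1)$ and $\lead$ constant along each fibre.

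To establish the fibre identity I would describe the reverse map: starting from a Cayley tree $C$, one builds all Greg trees in the fibre by choosing, for a certain distinguished set of edges, whether to insert an unlabelled vertex. The key structural claim—and I expect this to be the main obstacle—is identifying precisely which edges of $C$ are ``insertable'' and how insertions group together, so that the generating function over choices factors as the product $(y+1)^{\impe-\impp}(y+t)^{\impp}$. The natural guess is that improper edges are exactly the insertable ones, with an improper edge out of an improper parent contributing a factor $(y+t)$ (reflecting the two Greg-tree preimages: insert an unlabelled vertex, weight $y$, or relabel/leave as a labelled improper parent, weight $t$) and the remaining improper edges contributing $(y+1)$. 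Verifying that these insertions respect the degree-$\geq 3$ constraint on unlabelled vertices, that they leave $\beta_T$, $\deg(1)$, and $\lead(T)$ unchanged (using the observations from the proof of Theorem~\ref{th:main1} that inserting a vertex in the middle of an edge preserves $\beta$ and the leading status of descendants), and that the grouping of edges into proper/improper classes is consistent across the whole tree is where the real work lies. I would lean on Lemma~\ref{lem:leadimp} to control $\lead(C)$ in terms of $\impe(C)$ on the Cayley side, cross-checking that the exponent of $z$ is genuinely invariant under the correspondence.
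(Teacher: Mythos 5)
Your overall plan coincides with the paper's: invoke Theorem~\ref{th:main1}, then prove the fibre-wise identity $\sum_{T\mapsto C} y^{\unl(T)}t^{\impp(T)} = (y+1)^{\impe(C)-\impp(C)}(y+t)^{\impp(C)}$ with $\deg(1)$ and $\lead$ constant on fibres. You have also correctly guessed the shape of the expansion: one distinguished choice per improper parent contributes $(y+t)$ and the remaining improper edges contribute $(y+1)$ each. But the proposal stops exactly where the actual content of the proof begins, and the one concrete mechanism you gesture at does not work. You cannot realize the ``insert an unlabelled vertex'' option by placing a new vertex in the middle of an improper edge: that vertex would have degree $2$, violating the defining constraint on Greg trees. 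The paper's construction avoids this by operating on an improper parent $a$ as a whole rather than edge by edge: writing $I(a)=\{b_1,\dots,b_k\}$ for the improper children ordered by $\beta_T$, a choice of subset $S(a)\subseteq I(a)$ produces a \emph{chain} of $|S(a)|$ unlabelled vertices inserted above $a$, and \emph{all} children of $a$ (not just the improper ones) are redistributed among the chain according to which $\beta_T$-interval $[\beta_T(b_{i_{m-1}+1}),\beta_T(b_{i_m})]$ they fall into; each unlabelled vertex thereby acquires at least one redistributed child plus its chain-child, so the degree condition holds. The parent $a$ remains improper exactly when the maximal-$\beta$ improper child $m(a)$ is not selected, which is what makes the distinguished factor $(y+t)$ attach to that particular edge. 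None of this grouping is in your proposal, and without it the sum over ``insertion choices'' is not even well defined on Greg trees.

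Two further gaps: you assert the fibres partition $\mathcal{G}_{n+1}$ but give no argument that distinct pairs $(C,S)$ yield distinct Greg trees, nor that every Greg tree arises; the paper handles this with an explicit contraction map $\psi$ (merging each unlabelled vertex $i$ with its unique child $j$ satisfying $\beta(i)=\beta(j)$), shown to be well defined via a confluent, terminating rewriting system and Newman's Lemma, together with an induction on the set of improper parents for injectivity (Lemma~\ref{lem:varphi}). Also, Lemma~\ref{lem:leadimp} plays no role here --- the exponent of $z$ is controlled simply because the insertion happens above $a$ inside edges, which preserves $\beta_T$ on all original vertices and hence preserves $\lead$ and $\deg(1)$; that lemma is only needed later for the planar-tree comparison. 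As written, the proposal is a correct outline of the paper's strategy with the central combinatorial construction and the bijectivity argument missing.
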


\begin{figure}
\center
\begin{tikzpicture}
\path
(0,0) node[anchor=north](a) {\tikzstyle{doto} = [dash pattern= on 0.7cm off 0.1cm on \pgflinewidth off 4\pgflinewidth on \pgflinewidth off 4\pgflinewidth on \pgflinewidth off 0.1cm on 0.7cm]

\newcommand{\tri}[1]{\draw (#1.south) -- ++(0.1,-0.2) -- ++(-0.2,0) -- cycle}
\begin{tikzpicture}[anchor=center]

\path
(0,0) node(root) {$1$}
++(0,-2) node(a) {$a$}

++(-1.3,-1.3) node[anchor=south](b1) {$b_1$}
++(1,0) node[anchor=south](bk) {$b_k$}
++(0.6,0) node[anchor=south](c1) {$c_1$}
++(1,0) node[anchor=south](cl) {$c_l$}
;

\draw (root) -- ++(1.2,-1.5) -| (a) |- ++(-1.2,0.5) -- (root);
\draw[double] (b1) -- (a) -- (bk);
\draw (c1) -- (a) -- (cl);
\draw[draw=none] (b1) -- (bk) node[midway]{\ldots};
\draw[draw=none] (c1) -- (cl) node[midway]{\ldots};
\tri{b1};
\tri{bk};
\tri{c1};
\tri{cl};
\end{tikzpicture}}
+(2.3,-1) node[anchor=north](b) {

\begin{tikzpicture}
\draw[->] (0,0) -- (1,0) node[midway,above]{$\varphi_{a,S(a)}$};
\end{tikzpicture}
}
+(6,0) node[anchor=north](c) {\tikzstyle{doto} = [dash pattern= on 0.7cm off 0.1cm on \pgflinewidth off 4\pgflinewidth on \pgflinewidth off 4\pgflinewidth on \pgflinewidth off 0.1cm on 0.7cm]

\newcommand{\tri}[1]{\draw (#1.south) -- ++(0.1,-0.2) -- ++(-0.2,0) -- cycle}
\begin{tikzpicture}[anchor=center]

\path
(0,0) node(root) {$1$}
++(0,-2) node[circle,draw](u1) {}
+(-1.3,-1.3) node[anchor=south](b11) {$b_1$}
+(0,-1.3) node[anchor=south](b12) {$b_{i_1}$}

++(1,-1) node[circle,draw](u2) {}
+(-1.3,-1.3) node[anchor=south](b21) {$b_{i_1+1}$}
+(0,-1.3) node[anchor=south](b22) {$b_{i_2}$}

++(1.5,-1.5) node[circle,draw](u3) {}
+(-1.3,-1.3) node[anchor=south](b31) {$b_{i_{j-1}+1}$}
+(0,-1.3) node[anchor=south](b32) {$b_{i_j}$}

++(1,-1) node(a) {$a$}
++(-1.3,-1.3) node[anchor=south](b1) {$b_{i_j+1}$}
++(1,0) node[anchor=south](bk) {$b_k$}
++(0.6,0) node[anchor=south](c1) {$c_1$}
++(1,0) node[anchor=south](cl) {$c_l$}
;

\draw (root) -- ++(1.2,-1.5) -| (u1) |- ++(-1.2,0.5) -- (root);
\draw (b11) -- (u1) -- (b12) -- (u1) --
(u2) -- (b21) -- (u2) -- (b22) -- (u2);
\draw[doto] (u2) -- (u3);
\draw (b31) -- (u3) -- (b32) -- (u3) -- (a);

\draw[draw=none] (b11) -- (b12) node[midway]{\ldots};
\draw[draw=none] (b21) -- (b22) node[midway]{\ldots};
\draw[draw=none] (b31) -- (b32) node[midway]{\ldots};

\draw[double] (b1) -- (a) -- (bk);
\draw (c1) -- (a) -- (cl);
\draw[draw=none] (b1) -- (bk) node[midway]{\ldots};
\draw[draw=none] (c1) -- (cl) node[midway]{\ldots};

\tri{b11};
\tri{b12};
\tri{b21};
\tri{b22};
\tri{b31};
\tri{b32};
\tri{b1};
\tri{bk};
\tri{c1};
\tri{cl};

\end{tikzpicture}};

\end{tikzpicture}
\caption{Illustration of the operation $\varphi_{a,S(a)}$. Children are shown ordered with increasing values of $\beta_T$.}
\label{img:phi}
\end{figure}
\begin{proof}
To prove (\ref{eq:main2}), we will use Theorem \ref{th:main1}, and find an application between Cayley trees and Greg trees that behaves nicely with our statistics. We first need to have some definitions. 

Let $T \in \mathcal{G}_{n+1}$, for a vertex $a \in T$, we denote by $I(a) = \{b_1, b_2, \ldots, b_k\}$ its set of improper children, ordered so that $\beta_T(b_1)~<~\beta_T(b_2)~<~\ldots~<~\beta_T(b_k)$. Let $m(a) \in I(a)$ be so that $\beta_T(m(a)) = \max_{b\in I(a)}\{\beta_T(b)\}$. We define a \emph{selection function} $S$ over the improper parents of $T$ so that, for $a \in \impp(T)$, $S(a) \subset I(a)$. Let $\Sel(T)$ be the set of such functions. Choosing a selection function for a tree $T$ is exactly like choosing a subset of the improper edges of $T$, hence $\Sel(T) \cong \mathcal{P}(impe(T))$.

We construct a bijection between Greg trees and the set $\{(T,S)|S\in \Sel(T)\}$ of Cayley trees with pointed improper edges. Let $a \in \impp(T)$ and $S(a) = \{b_{i_1},\ldots,b_{i_j}\} $ with $i_1 < \ldots < i_j$. The idea of the bijection is to divide the set of improper children for every improper parent of $T$, creating an unlabelled vertex for every subdivision we create. We define the transformation $\varphi_{a,S(a)}(T)$ as follows: in $T$, add a chain of $|S(a)|$ unlabelled vertices as direct antecedents of $a$, and for every child $c$ of $a$, if $\beta_T(b_{i_{m-1}+1}) \leq \beta_T(c) \leq \beta_T(b_{i_m})$, then remove the edge $(a,c)$ and add an edge between $c$ and the $m$-th unlabelled vertex, as illustrated in Figure~\ref{img:phi}. It is important to note that if $m(a) \in S(a)$, then $a$ is no longer an improper parent in the resulting tree. Also note that if $a_1,a_2 \in \impp(T)$, then $\varphi_{a_1,S(a_1)} \circ \varphi_{a_2,S(a_2)} = \varphi_{a_2,S(a_2)} \circ \varphi_{a_1,S(a_1)}$ for any $S$. Finally, let $\varphi(S,T) = \left( \underset{a \in \impp(T)}{\circ} \varphi_{a,S(a)} \right) (T)$. We have
\begin{equation*}
\unl(\varphi(S,T)) = \sum_{a \in \impp(T)}|S(a)|, \text{ and}
\end{equation*}\begin{equation*}
\impp(\varphi(S,T)) = \#\{a | m(a) \not\in S(a)\}.
\end{equation*}
Also note that the degree of $1$ and the number of leading vertices is unchanged by the transformation. So for $T \in \mathcal{C}_n$, we have
\begin{equation*}
\sum_{S \in \Sel(T)}y^{\unl(\varphi(S,T))}t^{\impp(\varphi(S,T))} = (y+1)^{\impe(T)-\impp(T)}(y+t)^{\impp(T)}.
\end{equation*}

This operation is reversible, for every Greg tree we can find the only Cayley tree from which it can be obtained. Formally, we introduce a rewriting system $\rightarrow$ over Greg trees. Let $G \in \mathcal{G}_{n+1}$, let $i \in G$ be an unlabelled vertex, and let $j$ be its only child that verifies $\beta_G(i) = \beta_G(j)$. Let $G'$ be the tree we obtain by merging $i$ and $j$. Then $G \rightarrow G'$. For example, if $S \in \Sel(T)$, and $S'$ is obtained from removing one element from $S$, we have $\varphi(S,T) \rightarrow \varphi(S',T)$. 

This rewriting system is locally confluent, since rewriting an unlabelled vertex is a local operation within the tree and does not impact the rest of the tree, and this system is terminating, since it removes an unlabelled vertex each step. By Newman's Lemma (see for instance \cite{reddy1990term}), this system is confluent, hence we can define $\psi(G)$ as the only minimal tree of $G$ in this system, verifying $G \rightarrow^* \psi(G)$ and $\psi(G) \in \mathcal{C}_{n+1}$. 

Another approach to finding the inverse of $\varphi$ would have been to invert each $\varphi_{a,S(a)}$. The idea here is that all unlabelled vertices introduced by the transformation are such that their child with the largest value for $\beta_T$ is an ancestor to $a$, are the only ones that verify such property, and form a path in the tree. This can be easily seen in Figure \ref{img:phi}, since we ordered children with increasing values of $\beta_T$. So instead of inverting for each unlabelled vertex, we would do so for each labelled vertex that is the highest $\beta_T$-valued child of an unlabelled vertex.

\begin{lemma}
\label{lem:varphi}
Let $T,T' \in \mathcal{C}_n$, $S \in \Sel(T)$ and $S' \in \Sel(T')$. We have
\begin{equation}
\label{eq:injphi}
\varphi(S,T) = \varphi(S',T') \Rightarrow T=T' \text{ and } S=S'.
\end{equation}
\end{lemma}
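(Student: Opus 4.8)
The plan is to show that $\varphi$ admits a left inverse, from which the implication \eqref{eq:injphi} is immediate. I would split the recovery of the pair $(T,S)$ from the Greg tree $G := \varphi(S,T)$ into two independent steps: first reconstruct the underlying Cayley tree $T$, and then, with $T$ in hand, reconstruct the selection function $S$. The whole argument rests on the fact, already noted above, that $\varphi$ is a composition of the pairwise commuting local operations $\varphi_{a,S(a)}$, each of which only alters the portion of the tree consisting of $a$, its children, and the freshly inserted chain of unlabelled vertices sitting immediately above $a$; for distinct improper parents these portions are disjoint.

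First I would recover $T$ using the rewriting system $\rightarrow$. By Newman's Lemma the system is confluent and terminating, so $\psi(G)$ is a well-defined Cayley tree. The point is that $\psi(\varphi(S,T)) = T$: removing a single element from the selection yields the one-step reduction $\varphi(S,T) \rightarrow \varphi(S',T)$, so iterating down to the empty selection gives $\varphi(S,T) \rightarrow^{*} \varphi(\emptyset,T) = T$, and since the normal form is unique this forces $\psi(G) = T$. Applying $\psi$ to both sides of the hypothesis $\varphi(S,T) = \varphi(S',T')$ then yields $T = \psi(\varphi(S,T)) = \psi(\varphi(S',T')) = T'$.

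It remains to deduce $S = S'$ once $T = T'$, i.e.\ that the map $S \mapsto \varphi(S,T)$ is injective on $\Sel(T)$ for a fixed Cayley tree $T$. Here I would use the second, local description of the inverse: since $T$ has no unlabelled vertices, every unlabelled vertex of $G$ was created by $\varphi$, and these vertices organise into one chain above each improper parent $a$ of $T$. The chain above $a$ has length exactly $|S(a)|$, and decoding which children of $a$ sit on which link of the chain --- equivalently, reading off the labelled vertices that occur as the highest $\beta_T$-valued child of an inserted unlabelled vertex --- recovers the cut points, which are precisely the selected improper children comprising $S(a)$. Because the chains attached to distinct improper parents occupy disjoint regions of $G$, this determines $S(a)$ for every improper parent, hence $S$, so $S = S'$.

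The main obstacle is the faithfulness of this local decoding: one must verify that the inserted unlabelled vertices and their arrangement are intrinsically recognisable in $G$, that the partition of $a$'s children into consecutive $\beta_T$-intervals unambiguously pins down the selected subset $S(a) \subseteq I(a)$ --- including the boundary intervals and the degenerate case $m(a) \in S(a)$, where $a$ itself ceases to be improper --- and that the degree-at-least-three condition on the new unlabelled vertices holds throughout, so that $G$ is genuinely a Greg tree on which the reduction keys are well defined.
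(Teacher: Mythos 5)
Your first step --- recovering $T$ as the normal form $\psi(\varphi(S,T))=T$ of the confluent, terminating rewriting system --- is exactly the paper's argument for $T=T'$. For the second step the paper takes a different packaging of the same idea: instead of globally decoding all inserted chains at once, it fixes $T$ and inducts on subsets $I\subseteq\impp(T)$, proving $\varphi(S_{|I},T)=\varphi(S'_{|I},T)\Rightarrow S_{|I}=S'_{|I}$, with base case the injectivity of $J\mapsto\varphi_{a,J}(T)$ for a single improper parent $a$, and inductive step the factorization $\varphi(S_{|J},T)=\varphi(S_{|\{a\}},\varphi(S_{|I},T))$ coming from the commutativity of the $\varphi_{a,S(a)}$. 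Your global decoding is in fact the ``alternative approach'' the paper itself sketches just before the lemma (each inserted unlabelled vertex is recognisable because its highest-$\beta$ child lies on the path down to $a$, and these vertices form a chain). Both arguments ultimately rest on the same two facts --- per-parent injectivity of $J\mapsto\varphi_{a,J}(T)$ and disjointness of the regions touched by distinct improper parents --- and the ``main obstacle'' you flag is precisely the per-parent injectivity that the paper also asserts without detailed verification. The induction buys a clean reduction of the whole lemma to that one local statement; your version buys an explicit description of the inverse map. Either way the argument is sound, at essentially the same level of rigor as the paper's.
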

\begin{proof}
First, remark that by construction, $\psi(\varphi(S,T)) = T$ for any $S \in \Sel(T)$, which proves the first implication of (\ref{eq:injphi}). For the second part, we want to prove that we cannot make the same Greg tree from a Cayley tree and two different selection functions. At first glance, when looking at a chain of unlabelled vertices, one cannot say if it comes from one or multiple $\varphi_{a,S(a)}$. However, when looking at each improper vertex of $T$ one at a time, we see that there is only one way to revert $\varphi_{a,S(a)}$. The idea is then to build a recursive proof on the size of $S$. Let $T \in \mathcal{C}_n$ and $S,S' \in \Sel(T)$. For $I \subset \impp(T)$, we define $S_{|I}$ as the restriction of $S$ over $I$, which means, for $a \in \impp(T)$,
\[
S_{|I}(a) = \left\lbrace 
\begin{array}{l}
S(a) \text{ if } a \in I\\
\emptyset \text{ otherwise.}
\end{array}
\right.
\]
We prove by induction over the size of $I$ that
\begin{equation}
\label{eq:rec}
\varphi(S_{|I},T) = \varphi(S'_{|I},T) \Rightarrow S_{|I} = S'_{|I}.
\end{equation}
If $I = \{a\}$, we have $\varphi(S_{|\{a\}},T)= \varphi_{a,S(a)}(T)$. However, $J \mapsto \varphi_{a,J}(T)$ is injective, so $\varphi(S_{|\{a\}},T) = \varphi(S'_{|\{a\}},T) \Rightarrow S(a) = S'(a) \Rightarrow S_{|\{a\}} = S'_{|\{a\}}$.
Let $n \geq 0$ and assume that (\ref{eq:rec}) is true for any $I \subset \impp(T)$, with $|I| \leq n$. Let $J = \{a\} \cup I$ with $|J| = n+1$ and $a \not\in I$. Then we have
\[
\varphi(S_{|J},T) = \varphi_{a,S(a)}(\varphi(S_{|I},T)) = \varphi(S_{|\{a\}},\varphi(S_{|I},T)).
\]
Hence
\[
\begin{array}{r l}

&\varphi(S_{|J},T) = \varphi(S'_{|J},T) \\
\Rightarrow & \varphi(S_{|\{a\}},\varphi(S_{|I},T)) = \varphi(S'_{|\{a\}},\varphi(S'_{|I},T)) \\
\Rightarrow & S_{|\{a\}} = S'_{|\{a\}} \text{ and } \varphi(S_{|I},T) = \varphi(S'_{|I},T)\\
\Rightarrow & S_{|\{a\}} = S'_{|\{a\}} \text{ and } S_{|I} = S'_{|I} \\
\Rightarrow & S_{|J} = S'_{|J}.

\end{array}
\]
We conclude by taking $I = \impp(T)$ in (\ref{eq:rec}).
\end{proof}
Hence with Lemma \ref{lem:varphi}, $\{\{\varphi(S,T) , S \in \Sel(T) \} ,  T \in \mathcal{C}_{n+1}  \}$ is a partition of $\mathcal{G}_{n+1}$. So we have
\begin{equation*}
\begin{array}{c c c}

\sum_{G \in \mathcal{G}_{n+1}}y^{\unl(G)}t^{\impp(G)}&=&\sum_{T \in \mathcal{C}_{n+1}}\sum_{S \in \Sel(T)}y^{\unl(\varphi(S,T))}t^{\impp(\varphi(S,T))}\\
&=&\sum_{T \in \mathcal{C}_{n+1}}(y+1)^{\impe(T)-\impp(T)}(y+t)^{\impp(T)}.

\end{array}
\end{equation*}
Finally, $\varphi$ changes neither the degree of $1$, nor the number of leading vertices, which can easily be verified. %
Hence we can add the other two variables $x$ and $z$ to the equation, and conclude.
\end{proof}

\section{Between Cayley trees and planar trees}
Using a variable change and working with $R_n$, we were able to make sense of the multivariate Ramanujan polynomials in terms of Greg trees and Cayley trees. We now need to link our results back to the original observation from Guo and Zeng on the polynomials $Q_n$. Using definition \ref{def:rn}, we have a relation between labelled trees and planar labelled trees. In particular, we have the following equation.
\begin{proposition}
\label{prop:capla}
Let $n >1$. We have
\begin{equation}\label{eq:rq1}
\begin{array}{l}
\sum_{T \in \mathcal{C}_{n+1}} y^{\impe(T)} (t+1)^{\impp(T)} x^{\deg_T(1)-1} z^{\lead(T)-\deg_T(1)-1}\\
\hspace{40pt}= \sum_{T \in \mathcal{O}_{n+1}} y^{\impe(T)+\eld(T)} t^{\eld(T)} x^{\young_T(1)-1} z^{n-\impe(T) - \young_T(1) - \eld(T)}.\\
\end{array}
\end{equation}
\end{proposition}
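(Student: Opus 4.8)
The plan is to recognize both sides of (\ref{eq:rq1}) as specializations of the single polynomial $Q_n$, so that the proposition reduces to a formal comparison of two already-established combinatorial formulas. Concretely, I will show that each side equals $Q_n(x,y,z,yt)$, and the whole argument will consist of substituting expressions into polynomial identities, with no new bijection required.

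First I would treat the right-hand side. Starting from the Guo--Zeng formula (Theorem~\ref{th:qn}) and substituting $t \mapsto yt$ gives
\[
Q_n(x,y,z,yt) = \sum_{T \in \mathcal{O}_{n+1}} x^{\young_T(1)-1}(yt)^{\eld(T)}y^{\impe(T)}z^{n-\young_T(1)-\eld(T)-\impe(T)},
\]
and collecting the powers of $y$ turns $(yt)^{\eld(T)}y^{\impe(T)}$ into $y^{\impe(T)+\eld(T)}t^{\eld(T)}$, which is precisely the summand on the right-hand side of (\ref{eq:rq1}). Since Theorem~\ref{th:qn} is an identity of polynomials, this substitution is legitimate.

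Next I would treat the left-hand side using Theorem~\ref{th:main2}. The key observation, and the one step that requires actual insight, is that the substitution $y \mapsto y-1$ and $t \mapsto yt+1$ disentangles the Cayley weight: under it $(y+1)$ becomes $y$ and $(y+t)$ becomes $(y-1)+(yt+1)=y(1+t)$, so the factor $(y+1)^{\impe(T)-\impp(T)}(y+t)^{\impp(T)}$ becomes $y^{\impe(T)-\impp(T)}\bigl(y(1+t)\bigr)^{\impp(T)} = y^{\impe(T)}(1+t)^{\impp(T)}$. Evaluating Theorem~\ref{th:main2} at these arguments therefore yields
\[
R_n(x,y-1,z,yt+1) = \sum_{T \in \mathcal{C}_{n+1}} y^{\impe(T)}(t+1)^{\impp(T)} x^{\deg_T(1)-1} z^{\lead(T)-\deg_T(1)-1},
\]
which is exactly the left-hand side of (\ref{eq:rq1}).

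It then remains to connect the two specializations, which is immediate from Definition~\ref{def:rn}: since $R_n(x,y,z,t) = Q_n(x,y+1,z,t-1)$, evaluating at the above arguments gives $R_n(x,y-1,z,yt+1) = Q_n(x,(y-1)+1,z,(yt+1)-1) = Q_n(x,y,z,yt)$. Chaining the three equalities, the left-hand side equals $R_n(x,y-1,z,yt+1)=Q_n(x,y,z,yt)$, which equals the right-hand side. The hard part is purely the second step, namely spotting the substitution $(y,t)\mapsto(y-1,yt+1)$ that refactors the mixed weight of Theorem~\ref{th:main2} into the product form appearing on the left of (\ref{eq:rq1}); once it is in hand, every remaining manipulation is a routine evaluation of a known polynomial identity, uniform in $n$.
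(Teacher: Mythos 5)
Your proof is correct: the three specializations you chain together ($t\mapsto yt$ in Theorem~\ref{th:qn}, $(y,t)\mapsto(y-1,yt+1)$ in Theorem~\ref{th:main2}, and the identity $R_n(x,y-1,z,yt+1)=Q_n(x,y,z,yt)$ from Definition~\ref{def:rn}) all check out, and there is no circularity since both theorems are established independently of the proposition. However, this is not the route the paper takes as its actual proof. The paper states exactly your observation in the paragraph following the proposition --- that (\ref{eq:rq1}) ``is an immediate corollary from the definition of $R_n$, and the Theorems \ref{th:qn} and \ref{th:main2}'' --- but then explicitly declines to stop there, on the grounds that ``its summatory nature calls for a bijective proof.'' The official proof instead constructs the operation $\zeta_i$, which moves a block of children of a young improper parent up to become right siblings, trading one improper edge for one elder vertex while preserving $\impe+\eld$; iterating over subsets $S\subset\impp(\repr(T))$ of the canonical planar tree $\repr(T)$ yields a partition $\{Z(T)\}_{T\in\mathcal{C}_n}$ of $\mathcal{O}_n$ that proves the identity term by term. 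What your argument buys is brevity and a purely formal verification; what the paper's argument buys is a combinatorial explanation of \emph{why} the two generating functions agree --- in particular it exhibits, for each planar tree, the unique Cayley tree it comes from and the binomial expansion of $(t+1)^{\impp(T)}$ as a sum over elder-creating moves, which is information the algebraic substitution cannot provide. So: correct, but a genuinely different and strictly less informative route, one the author considered and deliberately set aside.
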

Indeed, with a change of variables, the definition of $R_n$ gives us \[ R_n(x,y-1,z,yt+1)=Q_n(x,y,z,yt).\] Equality (\ref{eq:rq1}) is an immediate corollary from the definition of $R_n$, and the Theorems \ref{th:qn} and \ref{th:main2}. However, its summatory nature calls for a bijective proof. First, let us use Lemma \ref{lem:leadimp} and shift the indexes to obtain the following equivalent formula
\begin{equation*}
\begin{array}{l}
\sum_{T \in \mathcal{C}_{n}} y^{\impe(T)} (t+1)^{\impp(T)} x^{\deg_T(1)-1} z^{n-\impe(T)-\deg_T(1)}\\
\hspace{40pt}= \sum_{T \in \mathcal{O}_{n}} y^{\impe(T)+\eld(T)} t^{\eld(T)} x^{\young_T(1)-1} z^{n-\impe(T)  - \eld(T) - \young_T(1)}.\\
\end{array}
\end{equation*}

\begin{figure}
\center
\begin{tikzpicture}
\path
(0,0) node[anchor=north](a) {\newcommand{\triz}[1]{\draw (#1.south) -- ++(0.2,-0.4) -- ++(-0.4,0) -- cycle}
\begin{tikzpicture}[anchor=center]

\path
(0,0) node(root) {$1$}
++(0,-2) node(j) {$j$}
+(-1,-1) node(jl) {}
+(1,-1) node(jr) {}
++(0,-1) node(i) {$i$}
++(-1.3,-1.3) node[anchor=south](e1) {$e_1$}
++(1,0) node[anchor=south](ep) {$e_p$}
++(0.6,0) node[anchor=south](k) {$k$}
(i)+(0.775,-0.8) node[scale=0.75](ti) {$T_i$}
;

\draw (root) -- ++(1.2,-1.5) -| (j) |- ++(-1.2,0.5) -- (root);
\draw (jl) -- (j) -- (jr) (j) -- (i) (i) -- ++(0.6,-1) -- ++(0.8,0) -- (i);
\draw[thick,red] (ep) -- (i) -- (e1);
\draw[thick,red,double] (i) -- (k);
\draw[dotted] (e1.north west) -- (k.north east) -- (k.south east) -- (e1.south west) -- cycle;

\draw[draw=none] (jl) -- (i) node[midway]{\ldots};
\draw[draw=none] (jr) -- (i) node[midway]{\ldots};
\draw[draw=none] (e1) -- (ep) node[midway]{\ldots};

\triz{e1};
\triz{ep};
\triz{k};

\end{tikzpicture}}
+(2.5,-0.7) node[anchor=north](b) {

\begin{tikzpicture}
\draw[->] (0,0) -- (1,0) node[midway,above]{$\zeta_i$};
\draw[->] (1,-0.1) -- (0,-0.1) node[midway,below] {$\zeta_i^{-1}$};
\end{tikzpicture}
}
+(5,0) node[anchor=north](c) {\newcommand{\triz}[1]{\draw (#1.south) -- ++(0.2,-0.4) -- ++(-0.4,0) -- cycle}
\newcommand{\trizz}[1]{\draw (#1.south) -- ++(0.28,-0.56) -- ++(-0.56,0) -- cycle}
\begin{tikzpicture}[anchor=center]

\path
(0,0) node(root) {$1$}
++(0,-2) node(j) {$j$}
+(-2.1,-1) node(jl) {}
+(2.1,-1) node(jr) {}
+(-1.1,-1.3) node[anchor=south](i) {$i$}
+(-0.5,-1.3) node[anchor=south](e1) {$e_1$}
+(0.5,-1.3) node[anchor=south](ep) {$e_p$}
+(1.1,-1.3) node[anchor=south](k) {$k$}
(i)+(0.02,-0.66) node[scale=0.8](ti) {$T_i$}
;

\draw (root) -- ++(1.2,-1.5) -| (j) |- ++(-1.2,0.5) -- (root);
\draw (jl) -- (j) -- (jr) (j) -- (i);
\draw[thick,red] (j) -- (e1) (ep) -- (j) -- (k);

\draw[draw=none] (jl) -- (i) node[midway]{\ldots};
\draw[draw=none] (jr) -- (k) node[midway]{\ldots};
\draw[draw=none] (e1) -- (ep) node[midway]{\ldots};
\draw[dotted] (e1.north west) -- (k.north east) -- (k.south east) -- (e1.south west) -- cycle;

\triz{e1};
\triz{ep};
\triz{k};
\trizz{i};

\end{tikzpicture}};

\end{tikzpicture}
\caption{Illustration of the bijective operation $\zeta_i$. The vertices from $e_1$ to $k$ are moved as a singular block from being leftmost children to right siblings of $i$, \textit{et vice versa}.}
\label{img:zeta}
\end{figure}

Let $T \in \mathcal{O}_n$. Let $i$ be a young vertex and improper parent of $T$. Since $1$ is a proper parent, $i$ cannot be labelled $1$, and we can introduce $j$ as the parent of $i$.

\begin{definition}
Let $\zeta_i(T)$ be the following transformation: if $k$ is the child of $i$ that verifies $\beta_T(i) = \beta_T(k)$, then we take $k$, its left siblings and all their respective subtrees, detach them from $i$ and attach them to $j$ in the same order directly on the right of $i$.
\end{definition} 
See Figure~\ref{img:zeta} for an illustration of this definition. Note that $k$ is the first younger child of $i$, so all of its left siblings are elders. Since $i$ was young, all its left siblings have a greater value for $\beta_T$ than $k$, so they remain elders. Moreover, $\beta_{zeta_i(T)}(i) > \beta_{zeta_i(T)}(k)$, so $i$ becomes an elder. Since $i$ was young, $k$ is still young in $\zeta_i(T)$. The rest of the tree remains unchanged. Hence we have increased by one the number of elders. 

However, $(i,k)$ was an improper edge, which has been removed. There are two cases to consider, first if $(j,i)$ was proper, itself and the other newly created edges are proper. Otherwise, if $(j,i)$ was improper, it is now proper since $i$ is now elder, but $(j,k)$ is improper in its stead, since $\beta_T(i) = \beta_{\zeta_i(T)}(k)$. In any case, the number of improper edges decreases by exactly one. So this transformation keeps the sum $\impe+\eld$ constant, and creates exactly one elder. It does not change the number of younger children of $1$. Finally, this operation is easily reversible: for an elder vertex $i$ in $T$, take all its right siblings up to the first younger, and move them as its leftmost children.

Note that if $i$ and $j$ are two improper parents of $T$, then $\zeta_i \circ \zeta_j (T) = \zeta_j \circ \zeta_i(T)$. Hence we can define, for $S \subset \impp(T) \cap \young(T)$, 
\begin{equation*}
\zeta(S,T) = \left( \underset{a \in S}{\circ} \zeta_a \right) (T).
\end{equation*}
Similarly, for $S \subset \eld(T)$, 
\begin{equation*}
\zeta^{-1}(S,T) = \left( \underset{a \in S}{\circ} \zeta^{-1}_a \right) (T).
\end{equation*}
\begin{definition}
For $T \in \mathcal{C}_n$, let the \emph{canonical planar tree} of $T$ $\repr(T) \in \mathcal{O}_n$ be the only planar tree whose underlying Cayley tree is $T$ with only younger vertices. In other words, its siblings are ordered from left to right with increasing values for $\beta_T$.
\end{definition}
Note that $\deg_T(1) = \young_{\repr(T)}(1)$. Let $Z(T) = \{\zeta(S,\repr(T)),S \subset \impp(\repr(T))\}$. We can now properly prove the proposition presented at the beginning of this section.

\begin{proof}[Proof of Proposition \ref{prop:capla}]
By construction of $\zeta$ as a bijection between $Z(T)$ and $\{(S,T),~S~\subset~\impp(T)\} $, for all $T \in \mathcal{C}_n$, we have
\begin{equation}
\label{eq:zetacn}
\begin{array}{l}
y^{\impe(\repr(T))} (t+1)^{\impp(\repr(T))} x^{\young_{\repr(T)}(1)-1} z^{-\impe(\repr(T))-\young_{\repr(T)}(1)}\\
\hspace{40pt}= \sum_{U \in Z(T)} y^{\impe(U)+\eld(U)} t^{\eld(U)} x^{\young_{U}(1)-1} z^{-\impe(U)  - \eld(U) - \young_{U}(1)}.\\
\end{array}
\end{equation}
Moreover, for all $U \in \mathcal{O}_n$, we have
\[
\exists T \in \mathcal{C}_n, \zeta^{-1}(\eld(U),U) = \repr(T).
\]
The idea is to apply $\zeta^{-1}$ to every elder vertex of $U$, so that the resulting tree only has younger vertices, which means it is a canonical planar tree for some Cayley tree. This also implies that $T$ is the only Cayley tree such that $U \in Z(T)$. Hence, $\{Z(T), T \in \mathcal{C}_n\}$ is a partition of $\mathcal{O}_n$. We conclude by summing (\ref{eq:zetacn}) over $\mathcal{C}_n$.

\end{proof}

\section{Acknowledgements}
We thank Matthieu Josuat-Verg\`es for his invaluable help and support. This research did not receive any specific grant from funding agencies in the public, commercial, or not-for-profit sectors.

\bibliographystyle{abbrv}
\bibliography{greg}

\end{document}